    \def\qed{\hfill$\sqcap\kern-8.0pt\hbox{$\sqcup$}$\\}
    \def\beq{\begin{eqnarray}}
    \def\eeq{\end{eqnarray}}
    \def\beqq{\begin{eqnarray*}}
    \def\eeqq{\end{eqnarray*}}
    \def\q{{\mathbb Q}}
    \def\p{{\mathbb P}}
    \def\e{{\mathbb E}}
    \def\r{{\mathbb R}}
    \def\aa{\mathcal A}
    \def\i{{\textnormal i}}
	\newtheorem{theorem}{Theorem}
	\newtheorem{lemma}{Lemma}
\title{A note on the series representation for the density of the supremum of a stable process}
\author{D. Hackmann
\\ \\
Dept. of Mathematics and Statistics\\  York University \\
4700 Keele Street 
\\Toronto, ON \\ M3J 1P3,  Canada 
\and 
A. Kuznetsov
\thanks{{Research supported by the
Natural Sciences and Engineering Research Council of Canada.}}  \\ \\
Dept. of Mathematics and Statistics\\  York University \\
4700 Keele Street 
\\Toronto, ON \\ M3J 1P3,  Canada 
 }
\date{current version: April 19, 2013}
\begin{document}

\maketitle

\begin{abstract}
\bigskip
An absolutely convergent double series representation for the density of the supremum of $\alpha$-stable L\'evy process is given in \cite[Theorem 2]{HubKuz2011} for almost all irrational $\alpha$. This result cannot be made stronger in the following sense: the series does not converge absolutely when $\alpha$ belongs to a certain subset of irrational numbers of Lebesgue measure zero (see \cite[Theorem 2]{Kuz2013}). Our main result in this note shows that {\it for every} irrational $\alpha$ there is a way to rearrange the terms of the double series, so that it converges to the density of the supremum. We show how one can establish this stronger result by introducing a simple yet non-trivial modification in the original proof of \cite[Theorem 2]{HubKuz2011}.
\end{abstract}

{\vskip 0.5cm}
 \noindent {\it Keywords}: stable processes, supremum, Mellin transform, continued fractions 
{\vskip 0.5cm}
 \noindent {\it 2000 Mathematics Subject Classification }: 60G52 

\newpage

%****************************************************************************************************************
%****************************************************************************************************************
%****************************************************************************************************************

\section{Introduction}

%****************************************************************************************************************
%****************************************************************************************************************
%*************************************************************************************************************

Consider an $\alpha$-stable L\'evy process $X$, defined by the characteristic exponent
\beq\label{def_Psi}
\Psi(z)=-\ln \e \left[ e^{\i z X_1} \right] = |z|^{\alpha} \times \left( e^{\pi \i \alpha (\frac{1}{2}-\rho)}  {\bf 1}_{\{z>0\}}+e^{-\pi \i \alpha (\frac{1}{2}-\rho)} {\bf 1}_{\{z<0\}} \right), \;\;\; z\in \r. 
\eeq
Here $(\alpha,\rho)$  belong  to the set of admissible parameters
\beqq
\aa=\{\alpha \in (0,1), \; \rho \in (0,1)\} \cup  \{\alpha\in(1,2), \; \rho \in [1-\alpha^{-1}, \alpha^{-1}]\}.
\eeqq
It can be shown (see \cite{Kuz2013} and Section 4.6 in \cite{Zolotarev1986}) that with this parameterization we have $\rho=\p(X_1>0)$ 
and that any stable process with $\alpha\ne 1$ can be rescaled so that it's characteristic exponent is written in the form \eqref{def_Psi}.

Our main object of interest is the density 
$p(x)$ of the supremum $
S_1:=\sup\{X_u\; : 0\le u \le 1\}$. 
Let us summarize what is currently known about this function. 
\begin{itemize}
\item[(i)] The simplest case is when $X$ belongs to one of the Doney classes ${\mathcal C}_{k,l}$ (here $k,l \in {\mathbb Z}$), which are  
 defined by the relationship $\rho+k=l/\alpha$ (see \cite{Doney1987}). In this case an absolutely convergent series representation for $p(x)$ is given in \cite[Theorem 10]{Kuz2011}. From now on we assume that $X$ does not belong to one of the Doney classes. 
\item[(ii)] When $\alpha$ is rational, \cite[Theorem 3]{Kuz2013} gives an explicit formula for the Mellin transform of $p(x)$, 
which is expressed in terms of elementary functions and the dilogarithm function ${\textnormal {Li}}_2(z)$. Unfortunately, 
there seems to be little hope of obtaining an explicit series representation for $p(x)$: This problem is equivalent to evaluating the residues of  the Mellin transform, which in this case has multiple poles and computing these residues in closed form seems to be impossible. 
\item[(iii)]  When $\alpha$ is irrational, we  define sequences $\{a_{m,n}\}_{m\ge 0,n\ge 0}$ and  $\{b_{m,n}\}_{m\ge 0,n\ge 1}$ as
follows
\beq\label{def_a_mn}
a_{m,n}:=\frac{(-1)^{m+n} }{\Gamma\left(1-\rho-n-\frac{m}{\alpha}\right)\Gamma(\alpha\rho+m+\alpha n)}
\prod\limits_{j=1}^{m} \frac{\sin\left(\frac{\pi}{\alpha} \left( \alpha \rho+ j-1 \right)\right)} {\sin\left(\frac{\pi j}{\alpha} \right)} 
\prod\limits_{j=1}^{n} \frac{\sin(\pi \alpha (\rho+j-1))}{\sin(\pi \alpha j)},
\eeq
\beq\label{def_b_mn}
b_{m,n}:=\frac{\Gamma\left(1-\rho-n-\frac{m}{\alpha}\right)\Gamma(\alpha\rho+m+\alpha n) }{\Gamma\left(1+n+\frac{m}{\alpha}\right)\Gamma(-m-\alpha n)}
a_{m,n}.
\eeq
As was established in  \cite{HubKuz2011}, there exists a set of irrational numbers ${\mathcal L}$, which is  uncountable, dense, and has Lebesgue measure zero, such that for all irrational $\alpha \notin {\mathcal L}$
we have
\beq\label{eqn_p1}
p(x)& = &  x^{-1-\alpha } \sum\limits_{n\ge 0} \sum\limits_{m\ge 0}b_{m,n+1} x^{-m-\alpha n}, \;\; {\textnormal { if }} \alpha \in (0,1), \\
\label{eqn_p2}
p(x)& = &  x^{\alpha\rho-1} \sum\limits_{n\ge 0} \sum\limits_{m\ge 0} a_{m,n} x^{ m+\alpha n}, \;\;\;\;\;\;\;  {\textnormal { if }} \alpha \in (1,2).
\eeq
\item[(iv)]  Theorem 2 in \cite{Kuz2013} states that the previous result cannot be substantially 
improved: There exists an uncountable dense subset  $\tilde {\mathcal L} \subset {\mathcal L}$, such that for all 
$\alpha \in \tilde {\mathcal L}$ and almost all $\rho$ the series in \eqref{eqn_p1}, \eqref{eqn_p2} do not converge absolutely for
all $x>0$. 
\end{itemize}

This situation is clearly deficient, since we do not have a useful expression for $p(x)$ if $\alpha \in {\mathcal L}$ and 
we do not know whether the series \eqref{eqn_p1}, \eqref{eqn_p2} converge if $\alpha \in {\mathcal L}\setminus \tilde {\mathcal L}$. Also, determining whether $\alpha$ belongs to ${\mathcal L}$ or $\tilde{\mathcal L}$ 
is problematic, as this would require full knowledge of the continued fraction representation of $\alpha$
(see \cite[Proposition 1]{HubKuz2011} and \cite[Proposition 1]{Kuz2013}). Since it is impossible to have absolute convergence of the series for all irrational $\alpha$, 
our only remaining possibility is to try to find some sort of ``conditional" convergence. 
Absolute convergence implies that the order of summation does not matter. In the absence of absolute convergence, the way in which we compute the partial sums of the double series becomes very important. Our main result 
shows that the right way to compute the partial sums in  
\eqref{eqn_p1}, \eqref{eqn_p2} is over triangles $\{ m+\alpha n < C \; : \; m\ge0, n\ge 0\}
\subset {\mathbb Z}^2$, and that one can
always find an increasing sequence $C_k \to +\infty$ (which depends crucially on the arithmetic properties of $\alpha$), such that the partial sums will converge to $p(x)$.

%****************************************************************************************************************
%****************************************************************************************************************
%****************************************************************************************************************

\section{Main result}

%****************************************************************************************************************
%****************************************************************************************************************
%*************************************************************************************************************

For $x\in \r$, let $[x]$ denote the largest integer not greater than $x$ and let $\{x\}:=x-[x]$ denote the fractional part of $x$.  
The continued fraction representation (see \cite{Khinchin}) is defined as
\beqq	
 x=[a_0;a_1,a_2,\dots]=a_0+\cfrac{1}{a_1+\cfrac{1}{a_2+ \dots }}
 \eeqq
where $a_0 \in {\mathbb Z}$ and $a_i \in {\mathbb N}$ for $i\ge 1$. 
The coefficients of the continued fraction can be computed recursively as follows: Define $x_1:=\{x\}$ and $x_{i+1}:=\{1/x_i\}$, $i\ge 1$, then
$a_0=[x]$ and $a_i=[1/x_i]$, $i\ge 1$. For $x\notin \q$ the continued fraction has infinitely many terms; truncating it after $n$ steps results in  a rational number $p_n/q_n:=[a_0;a_1,a_2,...,a_n]$, which provides the best rational approximation  
to $x$ (among all rational numbers with denominators not greater than $q_n$) and is called the $n$-th convergent (see \cite[Theorem 17]{Khinchin}).  The numerators and denominators of convergents are known to satisfy the two-term recurrence relation
\beq\label{continued_fraction_recurrence}
\begin{cases}
p_n=a_n p_{n-1} + p_{n-2}, \;\;\; p_{-1}=1, \;\;\; p_{-2}=0, \\
q_n=a_n q_{n-1} + q_{n-2}, \;\;\; q_{-1}=0, \;\;\; q_{-2}=1.
\end{cases}
\eeq
The following theorem is our main result in this note. 
\begin{theorem}\label{thm_main}
Assume that $\alpha \notin \q$. Then for all $x>0$ 
\begin{equation}
p(x)=
\begin{cases} \label{eqn_p_0_infty}
 & \displaystyle x^{-1-\alpha } 
 \lim_{k\rightarrow\infty} \sum_{\substack {m +1+ \alpha (n+\frac{1}{2})  < q_k\\ m \geq 0,\,n \geq 0}} b_{m,n+1} x^{-m-\alpha n}, \;\; {\textnormal { if }} \alpha \in (0,1), \\ \\
 & \displaystyle  x^{\alpha\rho-1} \lim_{k\rightarrow\infty} \sum_{\substack{m +1+ \alpha (n+\frac{1}{2})  < q_k\\m \geq 0,\,n \geq 0}}  a_{m,n} x^{ m+\alpha n}, \;\;\;\;\;\;\;  {\textnormal { if }} \alpha \in (1,2),
\end{cases}
\end{equation}	
where $a_{m,n}$ and $b_{m,n}$ are defined by (\ref{def_a_mn}) and (\ref{def_b_mn})
and  $q_k=q_k(2/\alpha)$ is the denominator of the $k$-th convergent for $2/\alpha$.
\end{theorem}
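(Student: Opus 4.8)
\medskip
\noindent\textbf{Plan of the proof.}
The plan is to follow the Mellin-transform and contour-shift argument that proves \cite[Theorem 2]{HubKuz2011}, modifying only the family of vertical lines along which the inversion integral is truncated. Recall from \cite{Kuz2013} that the Mellin transform $\mm(s):=\e\left[S_1^{s-1}\right]=\int_0^{\infty}x^{s-1}p(x)\,\d x$ is a meromorphic function which can be written explicitly as a ratio of Gamma functions and double gamma functions, whose functional equations involve shifts of the argument by $2$ and by $2/\alpha$, and that $p$ is recovered through $p(x)=\tfrac{1}{2\pi\i}\int_{\re(s)=c}\mm(s)x^{-s}\,\d s$ for any $c$ in the fundamental strip. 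Since $\alpha\notin\q$ and $X$ lies in no Doney class, the poles of $\mm$ situated to the right of this strip when $\alpha\in(0,1)$, respectively to the left of it when $\alpha\in(1,2)$, are simple, and computing their residues recovers the coefficients $b_{m,n+1}$, respectively $a_{m,n}$, of \eqref{eqn_p1}--\eqref{eqn_p2} — a computation already carried out in the proof of \cite[Theorem 2]{HubKuz2011}. The first step is therefore just to record the abscissae of these poles: the term carrying indices $(m,n)$ arises from a pole at $\re(s)=m+1+\alpha(n+1)$ when $\alpha\in(0,1)$, and at $\re(s)=1-\alpha\rho-m-\alpha n$ when $\alpha\in(1,2)$.

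In \cite{HubKuz2011} the contour is then pushed off to $\pm\infty$ through an essentially arbitrary sequence of vertical lines, after which absolute convergence of the resulting double series is extracted by bounding its terms; it is exactly these bounds that break down when $\alpha\in\mathcal L$. The modification is to push the contour instead through the specific lines $\re(s)=q_k+\tfrac{\alpha}{2}$ when $\alpha\in(0,1)$, and $\re(s)=2-\alpha\rho+\tfrac{\alpha}{2}-q_k$ when $\alpha\in(1,2)$, where $q_k=q_k(2/\alpha)$; note that, since $\alpha\notin\q$, the offset $\tfrac{\alpha}{2}$ guarantees that no pole of $\mm$ ever lies on a cutting line. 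A short computation with the pole locations recorded above shows that the set of poles swept as the contour crosses the $k$-th such line is exactly $\{(m,n):\ m+1+\alpha(n+\tfrac{1}{2})<q_k\}$, so that the residues collected after $k$ steps reproduce the $k$-th triangular partial sum appearing in \eqref{eqn_p_0_infty}. By the residue theorem — the horizontal segments at $\pm\i\infty$ contributing nothing because $\mm$ decays exponentially along vertical lines — this partial sum equals $p(x)$ minus $\tfrac{1}{2\pi\i}\int_{\re(s)=L_k}\mm(s)x^{-s}\,\d s$, where $L_k$ is the abscissa of the $k$-th cutting line. Thus Theorem~\ref{thm_main} will follow once we prove that this integral tends to $0$ as $k\to\infty$ for every fixed $x>0$.

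This convergence is the crux, and it is where the arithmetic of $2/\alpha$ enters. The exponential decay of $\mm$ in the imaginary direction reduces the task to controlling $\sup\{|\mm(s)|:\ \re(s)=L_k,\ |\im(s)|\le 1\}$, with $L_k=\pm q_k+O(1)$, as $k\to\infty$. For this one iterates the functional equations of the double gamma functions — in particular the one shifting $s$ by $2/\alpha$ — to express $\mm$ near $\re(s)=L_k$ as the value of $\mm$ on a fixed bounded set times a product of order $q_k$ explicit Gamma and sine factors. Anchoring the cut at a convergent denominator $q_k=q_k(2/\alpha)$, rather than at an arbitrary abscissa of comparable size which might lie just below the next, possibly astronomically larger, denominator $q_{k+1}$, is exactly what keeps this product under control: the defining optimality of the convergents ($\|q\,(2/\alpha)\|>\|q_k\,(2/\alpha)\|$ for every integer $q$ with $1\le q<q_k$) bounds it, uniformly over all irrational $\alpha$, by a quantity of order $\exp(Cq_k\log q_k)$ for an absolute constant $C$, which is the same continued-fraction estimate that underlies \cite[Proposition 1]{HubKuz2011} and \cite[Proposition 1]{Kuz2013}. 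One then checks that this growth is outweighed by the decay of $\mm(s)x^{-s}$ along $\re(s)=L_k$ — coming from the $1/\Gamma$ factors of $\mm$, of size $\exp(-q_k\log q_k+O(q_k))$, together with the factor $x^{\mp q_k+O(1)}$, which for fixed $x$ is merely of exponential size $\exp(O(q_k))$. I expect the main obstacle to be precisely this balancing: the continued-fraction bound and the intrinsic decay of $\mm$ are both of size $\exp(\Theta(q_k\log q_k))$, so one has to track the constants carefully and verify that the decay strictly dominates for \emph{every} $x>0$ and \emph{every} irrational $\alpha$ — which is also the structural reason that the natural order of summation yields only this conditional, continued-fraction-indexed convergence and not absolute convergence.
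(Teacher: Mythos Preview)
Your overall architecture --- run the contour-shift proof of \cite[Theorem 2]{HubKuz2011} but truncate only along the vertical lines indexed by $q_k(2/\alpha)$ --- is exactly the paper's. The gap is in your estimate for the remainder. After the functional-equation iteration, what must be controlled (this is the content of \eqref{error_estimate}, inherited verbatim from \cite{HubKuz2011}) is the product $\prod_{l=1}^{q_k-1}\bigl|\sec(\pi l/\alpha)\bigr|$, and the only bound your proposal actually supplies is the crude term-by-term one coming from $\|l\cdot 2/\alpha\|\ge\|q_{k-1}\cdot 2/\alpha\|>1/(2q_k)$ for $1\le l<q_k$, which yields $\exp(O(q_k\log q_k))$ with leading constant essentially equal to~$1$. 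The decay on the other side is $e^{-\epsilon q_k\ln q_k}$ with an $\epsilon>0$ that depends on $(\alpha,\rho)$ and over which you have no control. So the ``balancing'' you correctly flag as the main obstacle is real and, with this bound, cannot be resolved by tracking constants: for suitable $(\alpha,\rho)$ the naive estimate loses. (The references you cite, \cite[Proposition~1]{HubKuz2011} and \cite[Proposition~1]{Kuz2013}, characterize the bad sets ${\mathcal L}$, $\tilde{\mathcal L}$; they do not furnish the bound you need here.)

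The paper avoids this balance entirely by proving a genuinely sharper estimate (Lemma~\ref{cos}): invoking a result of Buslaev \cite{Buslaev} and Petruska \cite{Petruska1992353},
\[
\lim_{k\to\infty}\frac{1}{q_k(\beta)}\sum_{l=1}^{q_k(\beta)-1}\ln\bigl(2|\sin(\pi l\beta)|\bigr)=0
\qquad\text{for every irrational }\beta,
\]
and combining it with the double-angle identity $\sin(2\pi l\tau)=2\sin(\pi l\tau)\cos(\pi l\tau)$ and the choice $\beta=2/\alpha$, one obtains $\prod_{l=1}^{q_k-1}\bigl|\sec(\pi l/\alpha)\bigr|\le C\,6^{q_k}$. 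This is merely \emph{exponential} in $q_k$ and is therefore dominated by $e^{-\epsilon q_k\ln q_k}$ for \emph{any} $\epsilon>0$; no constants need be compared. That lemma is the single new ingredient in the paper's proof, and it is precisely what your plan is missing.
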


To obtain the proof of Theorem \ref{thm_main} one would need to modify only one step in the proof of \cite[Theorem 2]{HubKuz2011}. Instead of reproducing the long and technical proof of the latter result in full entirety, we only explain here which step has to be modified.

Assume that $\alpha \in (1,2)$ and that $\alpha \notin \q$. Combining formulas (17), (21), (24) and (26)
in \cite{HubKuz2011} we find that for every integer $k\ge 1$
\beq\label{px_sum_1}
p(x)=
 x^{\alpha\rho-1} \sum_{\substack{m + \alpha (n+\frac{1}{2})  < k \\m \geq 0,\,n \geq 0}}  a_{m,n} x^{ m+\alpha n} + 
 {\textrm{e}}_k(x),
\eeq
where the error term $ {\textrm{e}}_k(x)$ can be bounded from above as follows
\beq\label{error_estimate}
| {\textrm{e}}_k(x)|< (A(1+x))^k \times e^{- \epsilon k \ln(k)} 
\times\prod\limits_{l=1}^k 
 \bigg |\sec\left(\frac{\pi l}{\alpha}  \right)\bigg | \;,
\eeq
for some constants $A>0$ and $\epsilon>0$ (which can depend on $(\alpha,\rho)$ but not on $x$ or $k$). 
 The main step in the proof \cite[Theorem 2]{HubKuz2011} 
 is to construct a set ${\mathcal L} \subset \r\setminus \q$ (which is dense,  
uncountable, of Lebesgue measure zero), such that 
for all $\alpha \notin {\mathcal L}$ we have an upper bound
\beq\label{estimate_product}
\prod\limits_{l=1}^k 
 \bigg |\sec\left(\frac{\pi l}{\alpha}  \right)\bigg |<B 3^k, \;\;\; k\ge 1,
\eeq
for some constant $B=B(\alpha)$. 
Combining \eqref{estimate_product} with \eqref{error_estimate} implies that $ {\textrm{e}}_k(x)\to 0$
as $k\to +\infty$, which gives us 
\beq\label{px_sum_2}
p(x)=
 x^{\alpha\rho-1} \lim\limits_{k\to +\infty} \sum_{\substack{m + \alpha (n+\frac{1}{2})  < k \\m \geq 0,\,n \geq 0}}  a_{m,n} x^{ m+\alpha n}.
\eeq
To obtain the series representation \eqref{eqn_p2} we use formula  \eqref{def_a_mn} and
Proposition 1 and Lemma 1 in \cite{HubKuz2011} and check that for $\alpha \notin {\mathcal L}$
the series \eqref{eqn_p2} converges absolutely, 
therefore the order of summation in \eqref{px_sum_2} does not matter, and the sum  can be rewritten in the form \eqref{eqn_p2}. 

 The assumption $\alpha \notin {\mathcal L}$ is crucial for deriving inequality
 \eqref{estimate_product} and formula \eqref{px_sum_2}. The upper bound \eqref{estimate_product} is not true 
 for all irrational $\alpha$, and for a suitable $\alpha$ the product in \eqref{estimate_product} can grow arbitrarily fast. The following example illustrates this phenomenon. 
 
\vspace{0.25cm}
\noindent
{\bf Example 1:}
We define $\tau$ via its continued fraction representation
\beqq
\tau=[a_0;a_1,a_2,\dots]=[1,2,2^4,2^{1089},\dots]
\eeqq
where the coefficients $a_n$ are defined as $a_{n+1}=2^{q_n^2}$, $n\ge -1$ and the numerators $p_n$ and the denominators $q_n$
 of the $n$-th convergent are computed recursively via \eqref{continued_fraction_recurrence}. 
We find the first few terms of $p_n$ and $q_n$ to be
\beqq
[p_0,p_1,p_2,\dots]=[1,3,49,\dots], \qquad \qquad [q_0,q_1,q_2,\dots]=[1,2,33,\dots].
\eeqq
Let us take $\alpha=2/\tau\approx 1.34693878...$.
Since $a_n$ are even integers for $n\ge 1$, it follows from 
\eqref{continued_fraction_recurrence}  that $p_n$ are all odd numbers, so that we can write $p_n=2 r_n+1$ for some integers $r_n$.  
From Theorem 13 in \cite{Khinchin} we know that 
\beqq
|q_n \tau - p_n|< \frac{1}{q_{n+1}}=\frac{1}{a_{n+1}q_n+q_{n-1}}<\frac{1}{a_{n+1}}=2^{-q_n^2}, 
\eeqq
therefore $|q_n/\alpha-r_n-1/2|<2^{-q_n^2}$. 
Using the inequality $|\cos(\pi x)|\le \pi |x-1/2|$, we conclude that 
\beqq
\bigg|\sec\left(\frac{\pi q_n}{\alpha}  \right)\bigg |=
\frac{1}{|\cos\left(\pi \left(\frac{q_n}{\alpha}-r_n \right)\right)|} \ge 
\frac{1}{\pi |\frac{q_n}{\alpha}-r_n-\frac{1}{2} |}>\frac{2^{q_n^2}}{\pi},
\eeqq
therefore for this particular choice of $\alpha$ the full sequence of products in \eqref{estimate_product} cannot be bounded from above by any exponential function of $k$. 
\vspace{0.25cm}

As the previous example demonstrates, one can construct $\alpha$ so that the full sequence of products in \eqref{estimate_product}
can grow arbitrarily fast. What is remarkable is that for any irrational $\alpha$ there is always 
an infinite subsequence of these products which are bounded from above by an exponential function of the index. 

 \begin{lemma}\label{cos} Assume that $\tau \notin \q$ and $\tau>0$. 
There exists a constant $C=C(\tau)>0$ such that
 for all $k\ge 1$ 
\beq\label{sec} 
\prod_{l=1}^{q_k - 1}\vert \sec (\pi l \tau) \vert &\leq C 6^{q_k}. 
\eeq
where $q_k=q_k(2\tau)$ is the denominator of the $k$-th convergent for $2\tau$. 
\end{lemma}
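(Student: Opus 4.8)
My plan is to compare the product at the irrational point $\tau$ with the same product at a carefully chosen nearby rational point. Write $p_k/q_k$ for the $k$-th convergent of $2\tau$ (so $q_k=q_k(2\tau)$), and put
\[
\tau=\frac{p_k}{2q_k}+\delta_k,\qquad |\delta_k|=\tfrac12\Bigl|2\tau-\frac{p_k}{q_k}\Bigr|<\frac1{2q_kq_{k+1}}\le\frac1{2q_k^2}.
\]
The reason for approximating $\tau$ by $\frac{p_k}{2q_k}$ rather than by a convergent of $\tau$ is twofold: $\frac{p_k}{2q_k}$ has the even denominator $2q_k$, which is what makes $\prod_{l=1}^{q_k-1}\sec(\pi l p_k/(2q_k))$ explicitly computable; and the best-approximation property that will control the perturbation is a property of $2\tau$, which is precisely why $q_k(2\tau)$ enters the statement.

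First I would evaluate the rational product. Since $\gcd(p_k,q_k)=1$ we have $q_k\nmid lp_k$ for $1\le l\le q_k-1$, so $\cos(\pi l p_k/(2q_k))\neq0$; setting $m_l:=(q_k-lp_k)\bmod 2q_k$ gives $|\cos(\pi l p_k/(2q_k))|=|\sin(\pi m_l/(2q_k))|$, and because $\gcd(p_k,2q_k)\in\{1,2\}$ the numbers $m_0,\dots,m_{q_k-1}$ are $q_k$ distinct elements of $\{1,\dots,2q_k-1\}$. As each factor $|\sin(\pi m/(2q_k))|\le1$ and the $l=0$ term equals $1$, the classical identity $\prod_{m=1}^{n-1}2|\sin(\pi m/n)|=n$ with $n=2q_k$ yields
\[
\prod_{l=1}^{q_k-1}\Bigl|\sec\Bigl(\tfrac{\pi l p_k}{2q_k}\Bigr)\Bigr|
=\Bigl(\prod_{l=0}^{q_k-1}\bigl|\sin(\pi m_l/(2q_k))\bigr|\Bigr)^{-1}
\le\Bigl(\prod_{m=1}^{2q_k-1}|\sin(\pi m/(2q_k))|\Bigr)^{-1}
=\frac{2^{2q_k-1}}{2q_k}<4^{q_k}.
\]

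Next I would show that the per
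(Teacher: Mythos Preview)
Your proposal breaks off mid-sentence precisely at the crux. The computation at the rational approximant is correct and yields the clean bound $\prod_{l=1}^{q_k-1}|\sec(\pi l p_k/(2q_k))|<4^{q_k}$. What is missing is the perturbation estimate, i.e.\ a bound of the form
\[
\prod_{l=1}^{q_k-1}\frac{|\cos(\pi l p_k/(2q_k))|}{|\cos(\pi l\tau)|}\ \le\ C\,r^{q_k}
\]
with $r\le 3/2$. This step is not routine. A naive term-wise bound does not close: for the index $l$ with $m_l\in\{1,2q_k-1\}$ the rational cosine equals $\sin(\pi/(2q_k))$, of order $1/q_k$, while the argument shift $\pi l|\delta_k|$ can be as large as $\pi/(2q_{k+1})$, so that single ratio is in general bounded away from $1$; handling such contributions requires knowing how the small values of $m_l$ correlate with the size of $l$. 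Controlling the full product forces you to exploit the equidistribution of the residues $lp_k\bmod 2q_k$, and carrying this out is essentially a cosine variant of the Buslaev--Petruska limit for Sudler products --- exactly the external result the paper invokes as a black box.

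The paper's route is different and much shorter: it quotes
\[
\lim_{k\to\infty}\frac{1}{q_k}\sum_{l=1}^{q_k-1}\ln\bigl(2|\sin(\pi l\beta)|\bigr)=0,\qquad q_k=q_k(\beta),
\]
with $\beta=2\tau$, and then passes from sines to cosines via $\sin(2\pi l\tau)=2\sin(\pi l\tau)\cos(\pi l\tau)$, discarding the harmless factors $|\sin(\pi l\tau)|\le 1$. That last move costs a factor $2^{q_k}$, which is why the paper lands at $6^{q_k}$; your rational-point calculation already shows the true growth is nearer $4^{q_k}$, so a completed version of your direct argument could sharpen the constant and would be self-contained. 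As written, however, the decisive analytic step is absent.
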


\begin{proof}
We use the following result (see \cite[Lemma 4]{Buslaev} or \cite[Lemma 4]{Petruska1992353}): 
for any $\beta>0$, $\beta\notin \q$, 
\beq \label{bus}
\lim_{k \rightarrow \infty}\frac{1}{q_k}\sum_{l=1}^{q_k - 1}\ln(2\vert\sin(\pi l \beta)\vert) = 0,
\eeq
where $q_k=q_k(\beta)$. 
This is equivalent to
\beqq
\lim_{k \rightarrow \infty} \left[ \prod_{l=1}^{q_k - 1}\frac{1}{\vert\sin(\pi l \beta)\vert} \right]
^{\frac{1}{q_k}} = 2,
\eeqq
which implies the existence of a constant $C=C(\beta)>0$, such that for all $k\ge 1$
\beqq
 \prod_{l=1}^{q_k - 1}\frac{1}{\vert\sin(\pi l x)\vert}  < C 3^{q_k}. 
\eeqq
Using the identity $\sin(\pi l \beta) = 2\sin(\pi l \tfrac{\beta}{2}) \cos(\pi l \tfrac{\beta}{2})$ we conclude
\beqq
 \prod_{l=1}^{q_k - 1}\frac{1}{\vert\cos(\pi l \tfrac{\beta}{2})\vert} < 
 C 3^{q_k} 2^{q_k-1} \prod_{l=1}^{q_k - 1} \vert\sin(\pi l \tfrac{\beta}{2})\vert<C6^{q_k}. 
\eeqq
Taking $\beta=2\tau$ we obtain \eqref{sec}.
\end{proof}

Combining \eqref{sec}  with \eqref{px_sum_1} and \eqref{error_estimate} gives us the statement of Theorem \ref{thm_main} for $\alpha \in (1,2)$. The proof for $\alpha\in(0,1)$ can be obtained in a similar  way.

%**************************************************************************************************
%**************************************************************************************************
%**************************************************************************************************

%**************************************************************************************************
%**************************************************************************************************
%**************************************************************************************************

\end{document}